\newtheorem{proposition}{Proposition}
\newtheorem{remark}{Remark}
\newcommand{\func}[1]{\operatorname{#1}}  %Lidia 
\begin{document}

\title[]{Fast and accurate approximations to fractional powers of operators}
\thanks{This work was partially supported by GNCS-INdAM, PRA-University of Pisa
%University of Pisa (Grant PRA$\_$2020$\_$00) 
and  FRA-University of Trieste. 
The authors are members of the INdAM research group GNCS}
\keywords{Matrix functions, Gauss-Laguerre rule, Fractional Laplacian}
\author{Lidia Aceto}
\address{Lidia Aceto\\ Universit\`{a} di Pisa\\
 Dipartimento di Matematica, via F. Buonarroti 1/C - 56127 Pisa\\
Italy}
\email{lidia.aceto@unipi.it}
\author{Paolo Novati}
\address{Paolo Novati \\ Universit\`{a} di Trieste\\
Dipartimento di Matematica e Geoscienze, via Valerio 12/1,  34127 Trieste\\
Italy}
\email{novati@units.it}

\begin{abstract}
In this paper we consider some rational approximations to the fractional powers of self-adjoint positive operators, arising from the Gauss-Laguerre rules. We derive practical error estimates that can be used to select a priori the number of Laguerre points necessary to achieve a given accuracy. We also present some numerical experiments to show the effectiveness of our approaches and the reliability of the estimates.
\end{abstract}

\maketitle
%%%%%%%%%%%%%%%%%%%%%%
%%%%%%%%%%%%%%%%%%%%%%

\section{Introduction}
The numerical solution of problems involving fractional diffusion can lead to the computation of 
fractional powers of unbounded operators. For instance,  denoting by $\Delta$ the standard Laplace operator and taking $ \alpha \in (0,1),$  the fractional Laplace equation
\begin{equation} \label{lap}
(-\Delta)^\alpha u =f
\end{equation}
on a bounded Lipschitz domain subject to Dirichlet boundary conditions can be solved by computing 
\begin{equation} \label{dec}
\sum_{j=1}^{+\infty} \mu_j^{-\alpha} \langle f, \varphi_j \rangle \varphi_j,
\end{equation}
where $\mu_j$ and $\varphi_j$  are the eigenvalues and the  eigenfunctions of $-\Delta,$ respectively,  and $\langle \cdot, \cdot  \rangle  $  denotes the $L^2$-inner product. In practice, in this situation the fractional derivative can be identified by the fractional power. Keeping in mind this kind of applications, in this work we are interested in the numerical approximation of ${\mathcal{L}}^{-\alpha}, \alpha \in (0,1).$ Here ${\mathcal{L}}$ is a self-adjoint positive operator acting in an Hilbert space ${\mathcal H}$ in which  the eigenfunctions of ${\mathcal{L}}$ form an orthonormal basis of ${\mathcal{H}},$ so that  ${\mathcal{L}}^{-\alpha}$ can be written through the spectral decomposition of ${\mathcal{L}}$ as in (\ref{dec}). 

In recent years, this problem has been studied  by many authors. Due to the properties of the function $\lambda^{-\alpha}, \lambda \in [\ell, +\infty),\ell>0,$ the most effective approaches are those based on a rational approximation of this function. In the continuous setting of unbounded operators, methods based on the best uniform rational approximation (BURA) of functions closely related to $\lambda^{-\alpha}$ have been considered, for example, in \cite{HLMMV, HLMMP,  HML, HM} by using a modified version of the Remez algorithm. Another class of methods relies on quadrature rules for the integral representation of $\lambda^{-\alpha}$ \cite{AN0,AN,ABDN,Bo,V19,V18}. %The analysis given in \cite{Bo} has been also extended to regularly accretive operators in \cite{BP,BLP}. 
Very recently, time stepping methods for a parabolic reformulation of the fractional diffusion equation (\ref{lap}) given in \cite{V15}  have  also been interpreted in \cite{H} as a rational approximation of $\lambda^{-\alpha}.$ 

In this paper, starting from the integral representation given in \cite[Eq. (4)]{Bo}
\begin{equation} \label{intOR}
\mathcal{L}^{-\alpha }=\frac{2\sin (\alpha \pi )}{\pi }\int_{0}^{+\infty
}t^{2\alpha -1}({\mathcal I}+t^{2}\mathcal{L})^{-1}dt ,\qquad \alpha \in
(0,1),
\end{equation}
where ${\mathcal I}$ is the identity operator in ${\mathcal H},$  after suitable changes of variables we consider an alternative rational approximation based on the truncated Gauss-Laguerre rule. In order to construct the truncated approach, we exploit the error analysis of the standard Gauss-Laguerre rule based on the theory of analytic functions originally introduced in \cite{BA}.  We are able to show that in the operator norm 
the error decay like
\[
\exp(-c m^{1/2})
\]
where $m$ is the number of inversions and $c = 3.6 \alpha^{1/2} $ (cf. (\ref{bal})). In this view, the formula seems to be competitive with the Sinc quadrature studied in \cite{Bo} in which $c= \pi (1-\alpha)^{1/2}\alpha^{1/2}$ by  Remark 3.1 of the same paper. However, it appears to be slightly slower than that based on the analysis given in   \cite{S} and related to the BURA approach in which $c = 2 \pi  (1-\alpha)^{1/2}$ although the approach presented here does not suffer from the instability of Remez algorithm. 

We also present a further modification of the truncated Gauss-Laguerre rule, called equalized rule, that allows to further reduce the number of inversions to achieve the same accuracy, especially when $\alpha\le 1/2.$ 
\\

The paper is structured as follows. In Section \ref{Sec 2}  we present the Gauss-Laguerre approach. In Sections \ref{Sec 3}-\ref{Sec 4}, starting from the error analysis based on the theory of analytic functions, we present the error estimate attainable with the Gauss-Laguerre approach for the approximation of $\lambda^{-\alpha}.$ The analysis is then extended in Section \ref{Sec 5}  to the case of the operator  ${\mathcal L}^{-\alpha}.$ Finally,  the truncated rules are proposed in Section \ref{Sec 6}.

%%%%%%%%%%%%%%%%%%%%%%
%%%%%%%%%%%%%%%%%%%%%%

\section{The Gauss-Laguerre approach} \label{Sec 2}
As already said in the introduction, we start from the integral representation given in (\ref{intOR}).
Setting $y=\ln t $  we obtain 
\begin{equation}
\mathcal{L}^{-\alpha }=\frac{2\sin (\alpha \pi )}{\pi }\int_{-\infty
}^{+\infty }e^{{2\alpha }y}({\mathcal I}+e^{2y}\mathcal{L})^{-1}dy,\qquad \alpha \in
(0,1).  \label{mat0}
\end{equation}%
Now we consider separately the two integrals%
\begin{equation*}
\int_{-\infty }^{0}e^{{2\alpha }y}({\mathcal I}+e^{2y}\mathcal{L})^{-1}dy,\quad
\int_{0}^{+\infty }e^{{2\alpha }y}({\mathcal I}+e^{2y}\mathcal{L})^{-1}dy
\end{equation*}%
and consider the changes of variable ${2\alpha }y=-x$ and $2(1-\alpha )y=x$
respectively, to obtain 
\begin{eqnarray*}
\int_{-\infty }^{0}e^{{2\alpha }y}({\mathcal I}+e^{2y}\mathcal{L})^{-1}dy &=&\frac{1}{%
2\alpha }\int_{0}^{+\infty }e^{-x}({\mathcal I}+e^{-x/\alpha }\mathcal{L})^{-1}dx,   \\
\int_{0}^{+\infty }e^{{2\alpha }y}({\mathcal I}+e^{2y}\mathcal{L})^{-1}dy &=&\frac{1}{%
2(1-\alpha) }\int_{0}^{+\infty }e^{-x}(e^{-x/(1-\alpha )}{\mathcal I}+\mathcal{L}%
)^{-1}dx.
\end{eqnarray*}%
Consequently, setting
\begin{eqnarray}
I^{(1)}(\lambda ) &:=&\int_{0}^{+\infty
}e^{-x}(1+e^{-x/\alpha }\lambda )^{-1}dx,  \label{int1} \\
I^{(2)}(\lambda ) &:=&\int_{0}^{+\infty
}e^{-x}(e^{-x/(1-\alpha )}+\lambda )^{-1}dx,  \label{int2}
\end{eqnarray}%
the operator in (\ref{mat0}) can be written as 
\begin{equation}
\mathcal{L}^{-\alpha }=\frac{\sin (\alpha \pi )}{\alpha \pi } I^{(1)}(%
\mathcal{L})+\frac{\sin (\alpha \pi )}{(1-\alpha) \pi }I^{(2)}(\mathcal{L}).  \label{mat}
\end{equation}
It is easy to check that $I^{(1)}(\mathcal{L}) \rightarrow  \mathcal{I}$ as $\alpha \rightarrow 0$ and $I^{(2)}(\mathcal{L}) \rightarrow  \mathcal{L}^{-1 }$ as $\alpha \rightarrow 1.$ 

By applying the $n$-point Gauss-Laguerre rule  to both integrals with respect to
the weight function $\omega (x)=e^{-x},$  with weights $w_{j}^{(n)}$ and
nodes $\vartheta _{j}^{(n)}$ (in ascending order), we obtain the following $(2n-1,2n)$ rational
approximation 
\begin{equation}
\mathcal{L}^{-\alpha }\approx\frac{\sin (\alpha \pi )}{\alpha \pi } R_{n-1,n}^{(1)}(\mathcal{L})+ \frac{\sin (\alpha \pi )}{(1-\alpha) \pi }R_{n-1,n}^{(2)}(\mathcal{L})
=:R_{2n-1,2n}(\mathcal{L}),  \label{rapp}
\end{equation}
where
\begin{eqnarray*}
R_{n-1,n}^{(1)}(\lambda ) &=&\sum_{j=1}^{n}w_{j}^{(n)}%
\left( 1+e^{-{\vartheta _{j}^{(n)}}/{\alpha }}\lambda \right) ^{-1}, \\
R_{n-1,n}^{(2)}(\lambda ) &=&\sum_{j=1}^{n}w_{j}^{(n)}%
\left( e^{-{\vartheta _{j}^{(n)}}/{(1-\alpha) }}+\lambda \right) ^{-1}.
\end{eqnarray*}%
Clearly, formula (\ref{rapp}) implies that using $n$ points we have to perform $2n$ inversions.

%%%%%%%%%%%%%%%%%%%%%%
%%%%%%%%%%%%%%%%%%%%%%

\section{Error analysis for a general function}  \label{Sec 3}

In order to obtain an estimate of the error for the rational approximation defined in  (\ref%
{rapp}), we consider the approach introduced in \cite{BA} and based on the
theory of analytic functions. Assuming to work with a general function $f$
and then to consider the $n$-point Gauss-Laguerre rule $I_{n}(f)$ for 
\[ %\label{intf}
I(f)=\int_{0}^{+\infty }e^{-x}f(x)dx,
\]
we define the remainder as $E_{n}(f)=I(f)-I_{n}(f)$. For any given $R>1$, the
equation%
\begin{equation*}
\func{Re}(\sqrt{-z})=\ln R
\end{equation*}%
represents a parabola in the complex plane, that we denote by $\Gamma _{R}$,
symmetric with respect to the real axis, with vertex in $-\left( \ln
R\right) ^{2}$ and convexity oriented towards the positive real axis. By
writing $z=a+ib$, the above equation reads%
\begin{equation*}
a=\left( b^{2}-4\left( \ln R\right) ^{4}\right) \frac{1}{4\left( \ln
R\right) ^{2}}.
\end{equation*}%
The parabola degenerates to $[0,+\infty )$ as $R\rightarrow 1$. The theory given
in \cite{BA} states that, if for a given $R$ the function $f$ is analytic on
or within $\Gamma _{R}$ except for a pair of simple poles, $z_{0}$ and its
conjugate $\overline{z_{0}}$, then%
\begin{equation}
E_{n}(f)\approx -4\pi \func{Re}\left\{ r e^{-z_0}\left[ \exp \left(
\sqrt{-z_0}\right) \right] ^{-2 \sqrt{\bar n}}\right\} ,   \label{bf}
\end{equation}%
where $r$ is the residue of $f(z)$ at $z_{0}$ and 
\begin{equation} \label{numia}
\bar n=4n+2.
\end{equation}

This result follows from the fact that $E_{n}(f)$ can be written as a
contour integral%
\begin{equation*}
E_{n}(f)=\frac{1}{2\pi i}\int_{\Gamma }\frac{q_{n}(z)}{L_{n}(z)}f(z)dz,
\end{equation*}%
where $L_{n}(z)$ is the Laguerre polynomial, $q_{n}(z)$ is the so-called
associated function defined by%
\begin{equation*}
q_{n}(z)=\int\nolimits_{0}^{+\infty }\frac{e^{-x}L_{n}(x)}{z-x}dx,\quad
z\notin \lbrack 0,+\infty ),
\end{equation*}%
and $\Gamma $ is a contour containing $[0,+\infty )$ with the additional
property that no singularity of $f(z)$ lies on or within this contour (see \cite[\S 4.6]{DR} for a background).

Denoting by $C_{1}$ and $C_{2}$ two arbitrary small circles surrounding the
two poles the idea is then to define $\Gamma =\Gamma _{R}\cup C_{1}\cup
C_{2} $. In order to run this contour in the counterclockwise direction, one
can artificially add three line segments as shown in Figure \ref{Figure1} to
connect the circles with the parabola. Then, following the black and the red
arrows, the integrals along the line segments cancel and we obtain%
\begin{equation}  \label{rem}
E_{n}(f)=\frac{1}{2\pi i}
\left\{\int_{\Gamma _{R}}-\int_{C_{1}}-\int_{C_{2}} \right\}\frac{q_{n}(z)}{L_{n}(z)}f(z)dz.
\end{equation}

\begin{figure}[htbp]
  \centering
\includegraphics[width=0.80\textwidth]{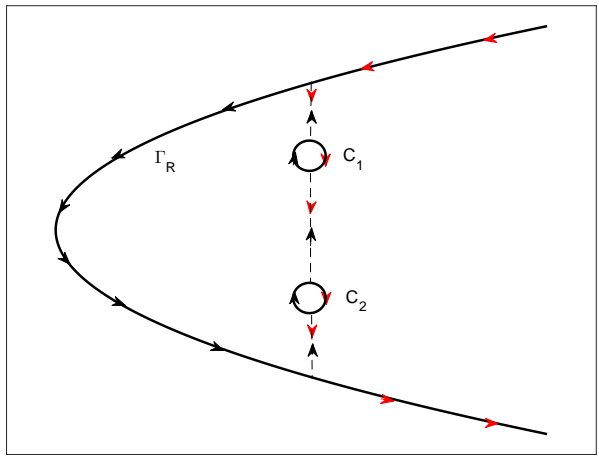}
 \caption{Contour chosen for a function $f $ analytic on or within the parabola $\Gamma _{R}$ with the exception of two simple and conjugated poles located inside $C_{1}$ and $C_{2},$ respectively.}
\label{Figure1}
\end{figure}
At this point, the estimate is based on the relation given in \cite[Eq. (5.4)]{E}, namely
\begin{equation*}
\frac{q_{n}(z)}{L_{n}(z)}=2\pi e^{-z}\left[ \exp \left( \sqrt{-z}\right) \right] ^{-2\sqrt{\bar n}}\left( 1+O\left( \frac{1}{n}%
\right) \right) ,\quad  z\notin \lbrack 0,+\infty ),
\end{equation*}
Since 
\begin{equation} \label{erre}
\left[ \exp \left(  \func{Re}\left( \sqrt{-z}\right)\right) \right] ^{-2\sqrt{\bar n}}=R^{-2\sqrt{\bar n}},\quad \text{\ for }z\in \Gamma _{R}, 
\end{equation}
the contribution on the parabola is given by
\begin{eqnarray} \label{fn}
&& \frac{1}{2\pi i}\int_{\Gamma _{R}}\frac{q_{n}(z)}{L_{n}(z)}%
f(z)dz =  R^{-2\sqrt{\bar n}}   \\
&\times& \frac{1}{i}  \int_{\Gamma _{R}}
e^{-z}  
\left[ \exp \left(  i \func{Im}\left( \sqrt{-z}\right)\right) \right] ^{-2\sqrt{\bar n}}
f(z) dz \left( 1+O\left( \frac{1}{n}%
\right) \right) :=\phi(n).   \nonumber  
\end{eqnarray}
In addition, using the residue theorem we have
\begin{eqnarray*}
&&\frac{1}{2\pi i} \left\{\int_{C_{1}} + \int_{C_{2}} \right\}  e^{-z}\left[ \exp \left( \sqrt{-z}\right) \right] ^{-2\sqrt{\bar n}} f(z)dz  \\
&=&  \func{Res}\left( e^{-z}\left[ \exp \left( \sqrt{-z}\right) \right] ^{-2\sqrt{\bar n}}  f(z),z_{0}\right)  \\
&+&  \func{Res}\left( e^{-z}\left[ \exp \left( \sqrt{-z}\right) \right] ^{-2\sqrt{\bar n}}  f(z),\overline{z_{0}}\right) \\
&=&2 \func{Re}\left( \func{Res}\left( e^{-z}\left[ \exp \left( \sqrt{-z}\right) \right] ^{-2\sqrt{\bar n}}  f(z),z_{0}\right) \right) \\
&=&2 \func{Re}\left( \func{Res}\left( f(z),z_{0}\right) 
e^{-z_0}\left[ \exp \left( \sqrt{-z_0}\right) \right] ^{-2\sqrt{\bar n}} 
\right).
\end{eqnarray*}
Therefore from (\ref{rem}), by taking into account (\ref{fn}), we obtain
\begin{eqnarray*}
E_{n}(f)&=& -4\pi \func{Re}\left( \func{Res}\left( f(z),z_{0}\right) 
e^{-z_0}\left[ \exp \left( \sqrt{-z_0}\right) \right] ^{-2\sqrt{\bar n}} 
\right)  \left( 1+O\left( \frac{1}{n} \right) \right)\\ &+& \phi(n).
\end{eqnarray*}
Obviously, this implies the formula (\ref{bf})  whenever the contribution from the parabola $\Gamma_R$ (i.e., $\phi(n)$) can be considered negligible. As for the modulus of the error, observing that 
\begin{eqnarray*}
&& \left| \func{Re}\left( \func{Res}\left( f(z),z_{0}\right) 
e^{-z_0}\left[ \exp \left( \sqrt{-z_0}\right) \right] ^{-2\sqrt{\bar n}} 
\right)  \right|  \leq  \\
&\leq&  \left|  \func{Res}\left( f(z),z_{0}\right) 
e^{-z_0}  \right|   \left[ \exp  \left(  \func{Re}\left( \sqrt{-z_0}\right) \right)\right] ^{-2\sqrt{\bar n}},
\end{eqnarray*}
we have
\begin{eqnarray*}
| E_{n}(f) |&\leq& 4\pi \left|  \func{Res}\left( f(z),z_{0}\right) 
e^{-z_0}  \right|   \left[ \exp  \left(  \func{Re}\left( \sqrt{-z_0}\right) \right)\right] ^{-2\sqrt{\bar n}} 
\left( 1+O\left( \frac{1}{n} \right) \right) \\ &+& |\phi(n)|.
\end{eqnarray*}
Since hereafter we assume that 
\[ 
\int_{\Gamma _{R}} |e^{-z}   f(z)| dz 
\] 
is bounded,  from  (\ref{fn}) we obtain (see  (\ref{numia}) and (\ref{erre}))
\[
\frac{\left| \phi(n) \right| }{ \left[ \exp  \left(  \func{Re}\left( \sqrt{-z_0}\right) \right)\right] ^{-2\sqrt{\bar n}} } \le  \frac{cR^{-2\sqrt{\bar n}}}{\left[ \exp  \left(  \func{Re}\left( \sqrt{-z_0}\right) \right)\right] ^{-2\sqrt{\bar n}}}  =O\left( \exp(-n^{1/2}) \right)
\]
and then 
\begin{equation} \label{mainerror}
| E_{n}(f) |  \leq 4\pi \left|  \func{Res}\left( f(z),z_{0}\right) 
e^{-z_0}  \right|   \left[ \exp  \left(  \func{Re}\left( \sqrt{-z_0}\right) \right)\right] ^{-2\sqrt{\bar n}} 
\left( 1+O\left( \frac{1}{n} \right) \right).
\end{equation}

%%%%%%%%%%%%%%%%%%%%%%
%%%%%%%%%%%%%%%%%%%%%%

\section{Error analysis for $\lambda ^{-\alpha }$}  \label{Sec 4}

From (\ref{mat}) and (\ref{rapp}) and defining
\begin{equation} \label{errori}
\varepsilon _{n}^{(i)}\left( \lambda \right) =\left\vert I^{(i)}(\lambda
)-R_{n-1,n}^{(i)}(\lambda )\right\vert ,\quad i=1,2,
\end{equation}%
we can write
\begin{equation}
\left\vert \lambda ^{-\alpha }-R_{2n-1,2n}(\lambda )\right\vert \leq \frac{%
\sin (\alpha \pi )}{\alpha \pi } \varepsilon _{n}^{(1)}\left( \lambda \right)
+ \frac{\sin (\alpha \pi )}{(1-\alpha) \pi }\varepsilon _{n}^{(2)}\left( \lambda \right)  .  \label{line}
\end{equation}
Hence, using the results of the previous section we can develop the error analysis by working separately on
the two integrals $I^{(i)}(\lambda )$, $i=1,2.$ 
 
%%%%%%%%%%%%%%%%%%%%%%
%%%%%%%%%%%%%%%%%%%%%%

\subsection{First integral $I^{(1)}(\protect\lambda )$}

The function involved in (\ref{int1}) is 
\begin{equation} \label{f1}
f(z)=(1+e^{-z/\alpha }\lambda )^{-1},
\end{equation}%
whose poles are given by%
\begin{equation*}
z_{k}=\alpha \ln \lambda +i(2k+1)\alpha \pi ,\quad k\in \mathbb{Z}.
\end{equation*}%
They are equally spaced along the line $\func{Re}(z)=\alpha \ln \lambda $,
symmetric with respect to the real axis, and the closest to the real axis
are $z_{0}=\alpha \ln \lambda +i\alpha \pi $ and $z_{-1}=\overline{z_{0}}$.
It is immediate to verify that there exists $R>1$ such that the corresponding
parabola $\func{Re}(\left( -z\right) ^{1/2})=\ln R$ contains only the
poles $z_{0}$ and $\overline{z_{0}}$ in its interior and that such an $R$
satisfies%
\[
\frac{\alpha }{2}\left( \sqrt{\left( \ln \lambda \right) ^{2}+\pi ^{2}}-\ln
\lambda \right) <\left( \ln R\right) ^{2}<\frac{\alpha }{2}\left( \sqrt{%
\left( \ln \lambda \right) ^{2}+9\pi ^{2}}-\ln \lambda \right). \label{rmax}
\]
These bounds follow by imposing $z_{0}\in \Gamma _{R}$ (the left one) and $%
z_{1}=\alpha \ln \lambda +i 3\alpha \pi \in \Gamma _{R}$ (the right one).

In order to apply (\ref{mainerror}), first we observe that 
\begin{eqnarray*}
\left( -z_{0}\right) ^{1/2} &=&\left[- \left( \alpha \ln \lambda +i\alpha \pi
\right)\right]^{1/2} \\
&=&\sqrt{\frac{\alpha }{2}}\left( \gamma ^{-}\left( \lambda \right) -i\gamma
^{+}\left( \lambda \right) \right) ,  
\end{eqnarray*}%
where%
\begin{equation}
\gamma ^{\pm }\left( \lambda \right) =\sqrt{\sqrt{\left( \ln \lambda \right)
^{2}+\pi ^{2}}\pm \ln \lambda }.  \label{gamma}
\end{equation}%
Then, recalling that $z_{0}/\alpha= \ln \lambda +i \pi ,$ we write 
\[
1+e^{-z/\alpha} \lambda = 1 - e^{-(z-z_0)/\alpha} = \frac{z-z_0}{\alpha} \sum_{j=0}^{+\infty} \frac{(-1)^j (z-z_0)^j}{\alpha^j (j+1)!}.
\]
In this case,  the residue of the function given in (\ref{f1}) at the simple pole $z_0$ is given by
\[
\func{Res}\left( f(z),z_{0}\right) =\lim_{z\rightarrow z_{0}}\frac{z-z_{0}}{1+e^{-z/\alpha
}\lambda }=\alpha.
\]
Therefore, from (\ref{mainerror}) we have
\begin{equation}
\varepsilon _{n}^{(1)}\left( \lambda \right) \le 4\pi \alpha \lambda ^{-\alpha
}\exp \left( -\gamma ^{-}\left( \lambda \right) \left( 2\alpha \bar n
\right) ^{1/2}\right) \left( 1+O\left(  \frac{1}{n}\right) \right).  \label{en1}
\end{equation}

%%%%%%%%%%%%%%%%%%%%%%
%%%%%%%%%%%%%%%%%%%%%%

\subsection{Second integral $I^{(2)}(\protect\lambda )$}

The function to consider in this case is 
\[ \label{f2}
f(z)=(e^{-z/(1-\alpha )}+\lambda )^{-1},
\]
whose poles are given by%
\begin{equation*}
z_{k}=-(1-\alpha )\ln \lambda +i(2k+1)(1-\alpha )\pi ,\quad k\in \mathbb{Z}.
\end{equation*}%
The only difference with respect to the integral  $I^{(1)}(\protect\lambda )$ is that the poles
have now a negative real part. Anyway, as before we can easily find a
parabola containing in its interior only the poles $z_0=-(1-\alpha )\ln \lambda
+ i(1-\alpha )\pi $ and its conjugate. We have now%
\begin{equation*}
\left( -z_{0}\right) ^{1/2}=\sqrt{\frac{1-\alpha }{2}}\left( \gamma
^{+}\left( \lambda \right) +i\gamma ^{-}\left( \lambda \right) \right),  
\end{equation*}%
where $\gamma ^{\pm }\left( \lambda \right) $ are defined in (\ref{gamma}).
As for the residue at $z_{0}$ we easily find that $\func{Res}\left( f(z),z_{0}\right) =(1-\alpha)/\lambda $. Using again (\ref{mainerror}) we have%
\begin{equation}
\varepsilon _{n}^{(2)}\left( \lambda \right) \leq 4\pi (1-\alpha)\lambda ^{-\alpha
}\exp \left( -\gamma ^{+}\left( \lambda \right) \left( 2(1-\alpha )\overline{%
n}\right) ^{1/2}\right) \left( 1+O\left(  \frac{1}{n}\right) \right).   \label{en2}
\end{equation}%

Finally,  plugging  in (\ref{line}) the bounds (\ref{en1}) and (\ref{en2}) we have the following result.
\begin{proposition}
Let   $\gamma ^{\pm }\left( \lambda \right) $  be defined in (\ref{gamma}) and $\bar n=4n+2.$  
Denoting by 
\begin{eqnarray}
g_{n}^{(1)}(\lambda ) &:=&\lambda ^{-\alpha }\exp \left( -\gamma ^{-}\left(
\lambda \right) \left( 2\alpha \bar n \right) ^{1/2}\right),  \label{g1}
\\
g_{n}^{(2)}(\lambda ) &:=&\lambda ^{-\alpha }\exp \left( -\gamma ^{+}\left(
\lambda \right) \left( 2(1-\alpha )\bar n \right) ^{1/2}\right)
\label{g2}
\end{eqnarray}%
the $\lambda $-dependent factors of $\varepsilon _{n}^{(1)}\left(
\lambda \right) $ and $\varepsilon _{n}^{(2)}\left( \lambda \right)$,
respectively, then we have
\begin{equation}
 \left\vert  \lambda ^{-\alpha } - R_{2n-1,2n}(\lambda )\right\vert \leq 
4  \sin(\alpha \pi ) \left[ g_{n}^{(1)}(\lambda )+g_{n}^{(2)}(\lambda )\right] 
\left( 1+O\left(  \frac{1}{n}\right) \right).   \label{esti}
\end{equation}
\end{proposition}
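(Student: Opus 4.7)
The plan is to present the proposition as a straightforward consequence of the two estimates derived in the preceding subsections, combined via the triangle inequality in (\ref{line}). I would keep the exposition short since all of the hard work — the contour deformation, the asymptotic formula for $q_n(z)/L_n(z)$, the control of the parabola contribution $\phi(n)$, and the explicit computation of the residues and of $(-z_0)^{1/2}$ at the relevant poles — has already been carried out in Sections \ref{Sec 3} and \ref{Sec 4}.

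Concretely, I would first recall the splitting (\ref{line}), which gives
\begin{equation*}
\left\vert \lambda^{-\alpha} - R_{2n-1,2n}(\lambda)\right\vert \leq \frac{\sin(\alpha\pi)}{\alpha\pi}\,\varepsilon_n^{(1)}(\lambda) + \frac{\sin(\alpha\pi)}{(1-\alpha)\pi}\,\varepsilon_n^{(2)}(\lambda).
\end{equation*}
Next, I would substitute the two componentwise bounds (\ref{en1}) and (\ref{en2}), written in terms of the functions $g_n^{(1)}(\lambda)$ and $g_n^{(2)}(\lambda)$ defined in (\ref{g1})--(\ref{g2}), so that the right-hand side becomes
\begin{equation*}
\frac{\sin(\alpha\pi)}{\alpha\pi}\cdot 4\pi\alpha\, g_n^{(1)}(\lambda)\left(1+O(1/n)\right) + \frac{\sin(\alpha\pi)}{(1-\alpha)\pi}\cdot 4\pi(1-\alpha)\, g_n^{(2)}(\lambda)\left(1+O(1/n)\right).
\end{equation*}
The key simplification is that the constants $\alpha$ and $1-\alpha$ coming from the residues $\operatorname{Res}(f(z),z_0)=\alpha$ in the first case and $\operatorname{Res}(f(z),z_0)=(1-\alpha)/\lambda$ in the second (the extra $\lambda$ being absorbed together with $|e^{-z_0}|=\lambda^{1-\alpha}$ into the $\lambda^{-\alpha}$ factor of $g_n^{(2)}$) cancel exactly against the factors $\alpha$ and $1-\alpha$ in the denominators of the prefactors in (\ref{line}). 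This yields the symmetric form $4\sin(\alpha\pi)[g_n^{(1)}(\lambda)+g_n^{(2)}(\lambda)]$, after merging the two $(1+O(1/n))$ factors into a single one.

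The main subtlety — really the only thing worth checking carefully — is the bookkeeping of the $O(1/n)$ remainders. Since both $g_n^{(1)}$ and $g_n^{(2)}$ are non-negative and the two $O(1/n)$ terms come from the same Elliott asymptotic expansion of $q_n/L_n$ with the same sign convention, they can be factored out as a common $(1+O(1/n))$ multiplicative factor on the right-hand side. I do not anticipate any genuine obstacle here; the proposition is essentially a clean restatement collecting (\ref{mainerror}), (\ref{en1}), (\ref{en2}) and the splitting (\ref{line}) into a single $\lambda$-explicit estimate for use in the operator-norm analysis of Section \ref{Sec 5}.
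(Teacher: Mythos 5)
Your proposal is correct and follows exactly the paper's own route: the proposition is obtained by plugging the bounds (\ref{en1}) and (\ref{en2}) into the splitting (\ref{line}), with the factors $\alpha$ and $1-\alpha$ from the residues cancelling the denominators to give $4\sin(\alpha\pi)\left[ g_{n}^{(1)}(\lambda)+g_{n}^{(2)}(\lambda)\right]$. Your bookkeeping of the residue, of $|e^{-z_0}|$, and of the merged $(1+O(1/n))$ factor matches the paper's computation.
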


In order to verify the estimate provided in (\ref{esti}), in Figure \ref{Figure2} we consider an example with $\lambda =10.$
Here and below, nodes and weights of the Gauss-Laguerre rule have been computed using the Matlab function \texttt{GaussLaguerre.m} given in \cite{G}.
\begin{figure}[htbp]
  \centering
\includegraphics[width=0.80\textwidth]{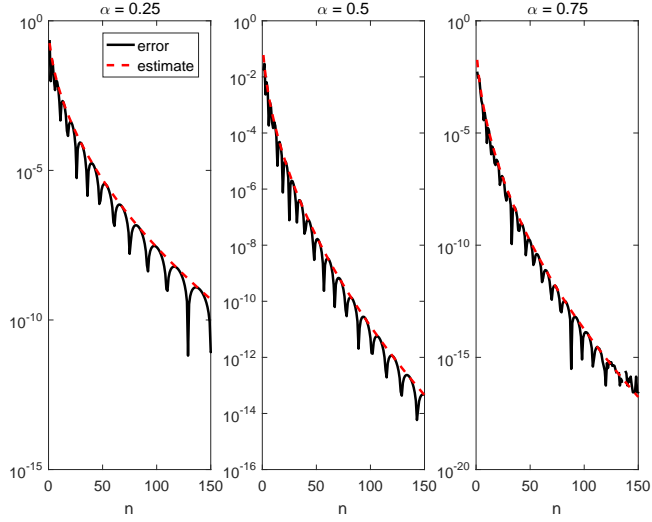}
 \caption{Absolute  error and its estimate given by (\ref{esti}) for $\lambda =10.$}
\label{Figure2}
\end{figure}
 
%%%%%%%%%%%%%%%%%%%%%%
%%%%%%%%%%%%%%%%%%%%%%

\section{Error analysis for $\mathcal{L}^{-\protect\alpha }$}  \label{Sec 5}

For simplicity, from now on we assume that $\sigma (\mathcal{L})\subseteq \lbrack 1,+\infty ).$
Since $\mathcal{L}$ is self-adjoint and positive, regarding  the error we have 
\begin{equation}
\left\Vert \mathcal{L}^{-\alpha }-R_{2n-1,2n}(\mathcal{L})\right\Vert \leq
\max_{\lambda \geq 1}\left\vert \lambda ^{-\alpha }-R_{2n-1,2n}(\lambda
)\right\vert,  \label{ltos}
\end{equation}
where $\left\Vert  \cdot \right\Vert$ denotes the operator norm in $\mathcal H.$
By  (\ref{esti}) we must therefore study  the functions $g_{n}^{(i)}(\lambda )$, 
$i=1,2,$ for $\lambda \ge 1.$ In particular, this means to study 
the functions $\gamma ^{\pm }\left( \lambda \right) $ (see (\ref{g1}) and (\ref{g2})). 
By (\ref{gamma}),  it is immediate to see that $\gamma ^{-}\left( \lambda \right) \rightarrow 0$ and $\gamma
^{+}\left( \lambda \right) \rightarrow +\infty $ as $\lambda \rightarrow
+\infty $. As consequence,
the function $g_{n}^{(1)}(\lambda )$ has exactly one maximum at a certain $%
\lambda _{n}>1$, whereas $g_{n}^{(2)}(\lambda )$ is monotone decreasing,  independently of $\alpha $ and $n.$
At this point, in order to compute the right hand side in (\ref{ltos}) the first step consists in finding the
point of maximum $\lambda _{n}.$ 
\begin{proposition} \label{prolam}
Let $\lambda _{n}$ be the maximum of the function $g_{n}^{(1)}(\lambda ).$  Then, for $n$ large enough
\[ 
\lambda _{n}=\widetilde{\lambda }_{n}\left( 1+O\left( n^{-1/3}\right) \right), 
\]
where%
\[
\widetilde{\lambda }_{n}= 
\exp \left( \left( \left( \frac{\overline{n}\pi ^{2}}{4\alpha}\right) ^{2/3}-\pi ^{2}\right) ^{1/2}\right). \label{ls}
\]
\end{proposition}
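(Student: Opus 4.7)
The plan is to locate $\lambda_n$ by taking the logarithmic derivative of $g_n^{(1)}$ and then analyzing the resulting critical-point equation via a suitable change of variables.

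First I would pass to $\log g_n^{(1)}(\lambda) = -\alpha\ln\lambda - \gamma^{-}(\lambda)\sqrt{2\alpha\bar{n}}$ and differentiate. A short computation from (\ref{gamma}) gives
\[
\frac{d\gamma^{-}}{d\lambda}=-\frac{\gamma^{-}(\lambda)}{2\lambda\sqrt{(\ln\lambda)^{2}+\pi^{2}}},
\]
so setting the derivative to zero and squaring yields
\[
\frac{2\alpha\bigl((\ln\lambda)^{2}+\pi^{2}\bigr)}{\bar{n}}=\sqrt{(\ln\lambda)^{2}+\pi^{2}}-\ln\lambda.
\]
Second, I would introduce $u=\sqrt{(\ln\lambda)^{2}+\pi^{2}}$ and $s=\ln\lambda$, so that $u^{2}-s^{2}=\pi^{2}$, i.e.\ $(u-s)(u+s)=\pi^{2}$. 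The previous equation reads $u-s=2\alpha u^{2}/\bar n$, hence $u+s=\pi^{2}\bar n/(2\alpha u^{2})$. Adding these,
\[
2u=\frac{\pi^{2}\bar{n}}{2\alpha u^{2}}+\frac{2\alpha u^{2}}{\bar{n}}.
\]

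Third, I would solve this equation asymptotically. Dropping the second (small) term on the right-hand side gives the leading-order root
\[
u_{0}=\left(\frac{\pi^{2}\bar{n}}{4\alpha}\right)^{1/3},
\]
so that $\ln\widetilde{\lambda}_{n}=\sqrt{u_{0}^{2}-\pi^{2}}$, matching the claimed expression. For the correction I would write $u=u_{0}(1+\delta)$ and substitute back. Since $\alpha u_{0}/\bar{n}=O(\bar{n}^{-2/3})$, the perturbed equation $1+\delta=(1+\delta)^{-2}+O(\bar{n}^{-2/3})(1+\delta)^{2}$ forces $\delta=O(\bar{n}^{-2/3})$ by the implicit function theorem applied near $\delta=0$.

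Finally I would transfer this estimate to $\lambda_{n}$ through $\ln\lambda_{n}=\sqrt{u^{2}-\pi^{2}}$. A Taylor expansion gives
\[
\ln\lambda_{n}-\ln\widetilde{\lambda}_{n}=\frac{u_{0}^{2}\delta}{\sqrt{u_{0}^{2}-\pi^{2}}}\bigl(1+o(1)\bigr)=O(u_{0}\,\delta)=O(\bar{n}^{-1/3}),
\]
since $u_0=O(\bar n^{1/3})$. Exponentiating produces $\lambda_{n}=\widetilde{\lambda}_{n}(1+O(n^{-1/3}))$, as claimed.

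The main obstacle is the last step: one must verify that the $O(\bar n^{-2/3})$ correction to $u$ becomes an $O(\bar n^{-1/3})$ correction to $\ln\lambda_n$, which requires tracking the amplification factor $u_0/\sqrt{u_0^2-\pi^2}\sim 1$ and, crucially, the factor $u_0^2$ arising when differentiating $\sqrt{u^2-\pi^2}$ with respect to $u$. Once the orders of magnitude are pinned down correctly, the statement follows by a routine perturbation argument; the only subtlety is checking that the neglected term in the master equation is genuinely of lower order uniformly in $\alpha\in(0,1)$ for $n$ large enough.
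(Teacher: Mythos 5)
Your argument is correct and follows essentially the same route as the paper: setting the logarithmic derivative to zero gives exactly the paper's stationarity equation (\ref{miaeq}), your use of $(u-\ln\lambda)(u+\ln\lambda)=\pi^{2}$ is the same manipulation as (\ref{ide}), and the leading root $u_{0}=\left(\pi^{2}\bar{n}/(4\alpha)\right)^{1/3}$ with correction $\delta=O(\bar{n}^{-2/3})$ reproduces the paper's $O(n^{-1/3})$ relative error for $\lambda_{n}$. The only organizational difference is that the paper localizes the maximizer with the a priori bound $(\ln\lambda_{n})^{3}\geq cn$ and a self-consistent formula involving $s(\lambda_{n})$, whereas you perturb around $u_{0}$ via the implicit function theorem; this is equivalent once you add the one-line remark that the stationarity equation has a unique solution with $\lambda\geq 1$, so the root near $u_{0}$ (which has $\ln\lambda>0$) is indeed $\lambda_{n}$ and not the spurious large-$u$ root created by the symmetrized equation.
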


\begin{proof}
By imposing $\displaystyle{\frac{d}{d\lambda}} g_{n}^{(1)}(\lambda )=0$, after some manipulation we arrive at the equation%
\begin{equation} \label{miaeq}
\frac{\sqrt{\left( \ln \lambda \right) ^{2}+\pi ^{2}}-\ln \lambda }{\left(
\ln \lambda \right) ^{2}+\pi ^{2}}=\frac{2\alpha }{\bar n},
\end{equation}
whose solution is denoted by $\lambda_n.$ 
Since 
\begin{eqnarray}
\frac{\sqrt{\left( \ln \lambda \right) ^{2}+\pi ^{2}}-\ln \lambda }{\left(
\ln \lambda \right) ^{2}+\pi ^{2}} &=&\frac{\pi ^{2}}{\left( \left( \ln
\lambda \right) ^{2}+\pi ^{2}\right) \left( \sqrt{\left( \ln \lambda \right)
^{2}+\pi ^{2}}+\ln \lambda \right) }  \label{ide} \\
&\geq &\frac{\pi ^{2}}{2\left( \left( \ln \lambda \right) ^{2}+\pi
^{2}\right) ^{3/2}}, \notag
\end{eqnarray}%
by (\ref{miaeq}) we first observe that there exists a constant $c$ independent
of $n$ such that $\left( \ln \lambda _{n}\right) ^{3}\geq cn$, for $n$ large
enough. Writing%
\begin{equation*}
\ln \lambda =s\sqrt{\left( \ln \lambda \right) ^{2}+\pi ^{2}},
\end{equation*}%
where%
\begin{equation}
s=s(\lambda )=\frac{1}{\sqrt{1+\left( \frac{\pi }{\ln \lambda }\right) ^{2}}},  \label{esse}
\end{equation}%
by (\ref{miaeq}) and (\ref{ide}) we obtain%
\begin{equation*}
\frac{\pi ^{2}}{\left( \left( \ln \lambda \right) ^{2}+\pi ^{2}\right)
^{3/2}\left( 1+s\right) }=\frac{2\alpha }{\overline{n}}.
\end{equation*}%
As consequence%
\begin{equation*}
\lambda _{n}=\exp \left( \left( \left( \frac{\overline{n}\pi ^{2}}{2\alpha
(1+s(\lambda _{n}))}\right) ^{2/3}-\pi ^{2}\right) ^{1/2}\right).
\end{equation*}%
Since asymptotically $\left( \ln \lambda _{n}\right) ^{2}\geq cn^{2/3}$, from
(\ref{esse}) we have%
\begin{equation*}
s(\lambda _{n})=1+O(n^{-2/3})
\end{equation*}%
and therefore%
\begin{equation}
\lambda _{n}=\exp \left( \left( \left( \frac{\overline{n}\pi ^{2}}{4\alpha }%
\right) ^{2/3}-\pi ^{2}+O(1)\right) ^{1/2}\right).   \label{eln}
\end{equation}%
Writing%
\begin{equation*}
\left( \left( \frac{\overline{n}\pi ^{2}}{4\alpha }\right) ^{2/3}-\pi
^{2}+O(1)\right) ^{1/2}=\left( \left( \frac{\overline{n}\pi ^{2}}{4\alpha }%
\right) ^{2/3}-\pi ^{2}\right) ^{1/2}+\sigma_{n}
\end{equation*}
we easily find that%
\begin{equation*}
\sigma_{n}=O\left( n^{-1/3}\right) .
\end{equation*}%
Finally, we obtain the result since%
\begin{equation*}
\lambda _{n}=\exp \left( \left( \left( \frac{\overline{n}\pi ^{2}}{4\alpha )}%
\right) ^{2/3}-\pi ^{2}\right) ^{1/2}\right) \exp (\sigma_{n}).
\end{equation*}
\end{proof}

This approximation is rather good as it can be observed in Figure \ref{Figure3}
where we plot $\ln \lambda _{n}$ and $\ln \widetilde{\lambda }_{n}$ for $%
n=10,11,\dots,120.$ Here the value of  $\lambda _{n}$ which verifies (\ref{miaeq}) has been numerically computed by  using a
nonlinear solver. 
\begin{figure}[htbp]
  \centering
\includegraphics[width=0.80\textwidth]{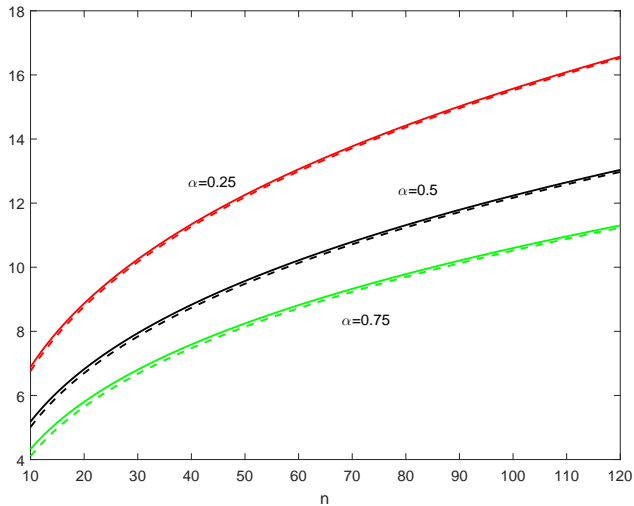}
 \caption{Comparison between $\ln \lambda _{n}$ (solid lines) and $\ln {\tilde \lambda_n }$ (dahed lines) for $n=10,11, \dots,120$.}
\label{Figure3}
\end{figure}

\begin{proposition} \label{pro3} Let $g_{n}^{(1)}(\lambda )$ and $g_{n}^{(2)}(\lambda )$ be the functions defined in (\ref%
{g1})  and (\ref{g2}), respectively. Then, 
\begin{eqnarray}
\max_{\lambda \geq 1}g_{n}^{(1)}(\lambda ) &=&  g_{n}^{(1)}(\lambda _{n}) 
=\exp \left( -3\left( n\alpha ^{2}\pi ^{2}\right)
^{1/3}\right) \left( 1+O\left( n^{-1/3}\right) \right), \label{dec1} \\
\max_{\lambda \geq 1}g_{n}^{(2)}(\lambda ) &=&g_{n}^{(2)}(1)  
 = \exp \left( -\left( 8\pi (1-\alpha )n\right) ^{1/2}\right) \left(
1+O\left( n^{-1/2}\right) \right) .  \label{dec3}
\end{eqnarray}
\end{proposition}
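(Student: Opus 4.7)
My plan is to handle the two claims separately, since (\ref{dec3}) follows almost immediately from the monotonicity of $g_n^{(2)}$ already recorded in the paragraph preceding the proposition, whereas (\ref{dec1}) requires a simplification at the critical point before any useful asymptotic expansion can be attempted.

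For (\ref{dec3}), the monotonicity places the maximum at $\lambda=1$, where $\gamma^+(1)=\sqrt{\pi}$, so $g_n^{(2)}(1)=\exp(-\sqrt{2\pi(1-\alpha)\bar n})$. Substituting $\bar n=4n+2$ and applying a single binomial step to $\sqrt{8\pi(1-\alpha)n+4\pi(1-\alpha)}=\sqrt{8\pi(1-\alpha)n}\,(1+O(n^{-1}))$ produces the claimed $\exp(-\sqrt{8\pi(1-\alpha)n})(1+O(n^{-1/2}))$.

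For (\ref{dec1}), the key is to exploit the first-order optimality condition. Writing $L_n=\ln\lambda_n$, equation (\ref{miaeq}) yields $\bar n(\sqrt{L_n^2+\pi^2}-L_n)=2\alpha(L_n^2+\pi^2)$, and hence
\[
\sqrt{2\alpha\bar n}\,\gamma^-(\lambda_n)=\sqrt{2\alpha\bar n(\sqrt{L_n^2+\pi^2}-L_n)}=2\alpha\sqrt{L_n^2+\pi^2}.
\]
Consequently the exponent of $g_n^{(1)}(\lambda_n)$ collapses to $-\alpha(L_n+2\sqrt{L_n^2+\pi^2})$, removing the nested square roots and reducing the problem to a one-variable expansion. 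Setting $\tilde L_n := \ln\tilde\lambda_n$, Proposition \ref{prolam} gives $L_n=\tilde L_n+O(n^{-1/3})$ with $\tilde L_n^2+\pi^2 = B := (\bar n\pi^2/(4\alpha))^{2/3}$ of order $n^{2/3}$. Two Taylor expansions, one of $\sqrt{L_n^2+\pi^2}$ around $\sqrt{B}$ and one of $\tilde L_n=\sqrt{B-\pi^2}=\sqrt{B}-\pi^2/(2\sqrt{B})+O(B^{-3/2})$, give $L_n+2\sqrt{L_n^2+\pi^2}=3\sqrt{B}+O(n^{-1/3})$. Finally $3\alpha\sqrt{B}=3(\alpha^2\bar n\pi^2/4)^{1/3}=3(n\alpha^2\pi^2)^{1/3}(1+O(n^{-1}))$ via $\bar n/4=n+1/2$, and exponentiation yields (\ref{dec1}).

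The main obstacle is the bookkeeping in this last step: since $\sqrt{L_n^2+\pi^2}$ is of order $n^{1/3}$, the $O(n^{-1/3})$ uncertainty in $L_n$ could in principle contaminate the leading term of the exponent, and one has to verify that the net additive error remains $O(n^{-1/3})$ so that it can be absorbed in a multiplicative $(1+O(n^{-1/3}))$ factor. The optimality simplification is what makes this control feasible; without it, the $\lambda^{-\alpha}$ factor and the exponential factor would have to be expanded independently at different asymptotic scales, and matching them would be considerably more painful.
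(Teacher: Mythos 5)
Your proof is correct and follows essentially the same route as the paper: (\ref{dec3}) comes from monotonicity and $\gamma^{+}(1)=\sqrt{\pi}$, and (\ref{dec1}) from combining the stationarity equation (\ref{miaeq}) with the asymptotics of $\lambda_n$ from Proposition \ref{prolam}. Your only variation is to merge $\lambda_n^{-\alpha}$ and the exponential factor into the single exact exponent $-\alpha\left(\ln\lambda_n+2\sqrt{(\ln\lambda_n)^2+\pi^2}\right)$ before expanding, whereas the paper expands $\gamma^{-}(\lambda_n)\left(2\alpha\bar n\right)^{1/2}$ and $\lambda_n^{-\alpha}$ separately; both rest on the same identity $\left(\gamma^{-}(\lambda_n)\right)^2=\frac{2\alpha}{\bar n}\left((\ln\lambda_n)^2+\pi^2\right)$, and your bookkeeping of the $O(n^{-1/3})$ terms is sound.
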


\begin{proof}
First of all we need to evaluate $\gamma ^{-}\left( \lambda _{n}\right)
\left( 2\alpha \overline{n}\right) ^{1/2}$. Using (\ref{gamma}) and  (\ref{miaeq}) we have%
\begin{equation*}
\left( \gamma ^{-}\left( \lambda _{n}\right) \right) ^{2}=\frac{2\alpha }{%
\overline{n}}\left( \left( \ln \lambda _{n}\right) ^{2}+\pi ^{2}\right) .
\end{equation*}%
By (\ref{eln}) we also have%
\begin{equation}
\left( \ln \lambda _{n}\right) ^{2}+\pi ^{2}=\left( \frac{\overline{n}\pi
^{2}}{4\alpha }\right) ^{2/3}+O(1)  \label{ln}
\end{equation}%
and hence%
\begin{eqnarray*}
\gamma ^{-}\left( \lambda _{n}\right)  &=&\left( \left( \frac{\alpha }{2%
\overline{n}}\right) ^{1/3}\pi ^{4/3}+O(n^{-1})\right) ^{1/2} \\
&=&\left( \frac{\alpha }{2\overline{n}}\right) ^{1/6}\pi ^{2/3}\left(
1+O(n^{-2/3})\right).
\end{eqnarray*}%
Consequently,
\begin{equation*}
\gamma ^{-}\left( \lambda _{n}\right) \left( 2\alpha \overline{n}\right)
^{1/2}=(2 \overline{n}\alpha ^2 \pi^2)^{1/3}\left(
1+O(n^{-2/3})\right).
\end{equation*}%
Using the result obtained in Proposition \ref{prolam}, we can write
\begin{eqnarray*}
\lambda _{n}^{-\alpha } 
&=&\exp \left( -\alpha \left( \left( \left( \frac{\overline{n}\pi ^{2}}{%
4\alpha }\right) ^{2/3}-\pi ^{2}\right) ^{1/2}\right) \right) \left(
1+O\left( n^{-1/3}\right) \right)  \\
&=&\exp \left( -\left( \frac{\overline{n}\alpha ^{2}\pi ^{2}}{4}\right)
^{1/3}\right) \left( 1+O\left( n^{-1/3}\right) \right). 
\end{eqnarray*}%
Therefore, we have%
\begin{eqnarray*}
g_{n}^{(1)}(\lambda _{n}) &=&\exp \left( -\left( \frac{\overline{n}\alpha
^{2}\pi ^{2}}{4}\right) ^{1/3}\right) \left( 1+O\left( n^{-1/3}\right)
\right)   \notag \\
&\times&\exp \left( -( 2 \overline{n} \alpha ^2 \pi^2 )^{1/3}\left(
1+O(n^{-2/3})\right) \right)   \notag \\
&=&\exp \left( -\left( \overline{n}\alpha ^{2}\pi ^{2}\right) ^{1/3}\left(
4^{-1/3} +2^{1/3}\right) \right) \left( 1+O\left( n^{-1/3}\right) \right).
\label{dec0}
\end{eqnarray*}%
Finally, recalling that $\overline{n}=4n+2$ we obtain the result.

As for the function $g_{n}^{(2)}(\lambda )$, the situation is much simpler.
Indeed, since it is monotone decreasing using (\ref{gamma}) and (\ref{g2}) we have that%
\begin{eqnarray*}
\max_{\lambda \geq 1}g_{n}^{(2)}(\lambda ) &=&g_{n}^{(2)}(1)  \notag =\exp \left( -\left( 2\pi (1-\alpha )\overline{n}\right) ^{1/2}\right) 
\label{dec2} \\
&=&\exp \left( -\left( 8\pi (1-\alpha )n\right) ^{1/2}\right) \left(
1+O\left( n^{-1/2}\right) \right).  
\end{eqnarray*}
\end{proof}

Finally, we can prove the following result.
\begin{proposition} \label{prop4}
Let $R_{2n-1,2n}(\mathcal{L})$ be the rational approximation given in (\ref{rapp}). Then, with respect to the operator norm in ${\mathcal H}$  we have for $n$ large enough 
\begin{equation}
\left\Vert \mathcal{L}^{-\alpha }-R_{2n-1,2n}(\mathcal{L})\right\Vert  \leq 4\sin
(\alpha \pi )\exp \left( -3\left( n\alpha ^{2}\pi ^{2}\right) ^{1/3}\right)
\left( 1+O\left( n^{-1/3}\right) \right).   \label{e5}
\end{equation}
\end{proposition}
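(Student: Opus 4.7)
The plan is to assemble the proposition from the already-established scalar bound (\ref{esti}), the spectral reduction (\ref{ltos}), and the two asymptotic maxima obtained in Proposition \ref{pro3}. The underlying observation is that both terms $g_n^{(1)}$ and $g_n^{(2)}$ contribute to the scalar bound, but their maximal values decay at very different rates in $n$, so one of them is absorbed into the error factor of the other.

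First I would invoke (\ref{ltos}) to pass from the operator norm to the scalar maximum over $[1,+\infty)$, and then apply (\ref{esti}) pointwise inside the maximum:
\[
\left\Vert \mathcal{L}^{-\alpha}-R_{2n-1,2n}(\mathcal{L})\right\Vert
\leq 4\sin(\alpha\pi)\,\max_{\lambda\ge 1}\bigl[g_n^{(1)}(\lambda)+g_n^{(2)}(\lambda)\bigr]\bigl(1+O(n^{-1})\bigr).
\]
Using the trivial estimate $\max(f+g)\le \max f+\max g$ and then inserting the explicit maxima from Proposition \ref{pro3}, I obtain a bound of the form
\[
4\sin(\alpha\pi)\Bigl[\exp\!\bigl(-3(n\alpha^2\pi^2)^{1/3}\bigr)+\exp\!\bigl(-(8\pi(1-\alpha)n)^{1/2}\bigr)\Bigr]\bigl(1+O(n^{-1/3})\bigr),
\]
where I have collapsed the two $O(n^{-1/3})$ and $O(n^{-1/2})$ correction factors into a single $O(n^{-1/3})$, which is legitimate since both converge to $1$.

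The only nontrivial step is to check that the second exponential can be absorbed into the first, producing the clean statement (\ref{e5}). Since $n^{1/2}$ dominates $n^{1/3}$ asymptotically, for every fixed $\alpha\in(0,1)$ one has, for all sufficiently large $n$,
\[
(8\pi(1-\alpha)n)^{1/2}-3(n\alpha^2\pi^2)^{1/3}\;\to\;+\infty,
\]
so the ratio $\exp(-(8\pi(1-\alpha)n)^{1/2})/\exp(-3(n\alpha^2\pi^2)^{1/3})$ decays faster than any negative power of $n$; in particular it is $O(n^{-1/3})$. Factoring out $\exp(-3(n\alpha^2\pi^2)^{1/3})$ then yields exactly (\ref{e5}).

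I do not anticipate a serious obstacle, since all the hard work has already been done in Propositions \ref{prolam} and \ref{pro3}; the only mildly delicate point is keeping track of the $\alpha$-dependence in the statement ``for $n$ large enough.'' Since the crossover happens when $(8\pi(1-\alpha))^{1/2}n^{1/6}$ exceeds $3(\alpha^2\pi^2)^{1/3}$ by enough to compensate the constants, the threshold does degenerate as $\alpha\to 1$; this has to be acknowledged but does not affect the asymptotic statement for each fixed $\alpha$, which is what (\ref{e5}) claims.
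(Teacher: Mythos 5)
Your argument is correct and is essentially the paper's own proof: pass to the scalar maximum via (\ref{ltos}), apply (\ref{esti}), split the maximum of the sum, and absorb $g_n^{(2)}(1)$ into the correction factor of $g_n^{(1)}(\lambda_n)$ using Proposition \ref{pro3} (the paper bounds the ratio by $1/n$, you by $O(n^{-1/3})$, an immaterial difference). No gap to report.
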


\begin{proof}
First of all, by comparing (\ref{dec1}) with (\ref{dec3}) for $n
$ large enough we can write 
\begin{equation*}
\frac{g_{n}^{(2)}(1)}{g_{n}^{(1)}(\lambda _{n})}\leq \frac{1}{n}. 
\end{equation*}%
Therefore, 
\begin{eqnarray*}
\max_{\lambda \geq 1}\left( g_{n}^{(1)}(\lambda )+g_{n}^{(2)}(\lambda
)\right)  &\leq &\max_{\lambda \geq 1}g_{n}^{(1)}(\lambda )+\max_{\lambda
\geq 1}g_{n}^{(2)}(\lambda ) \\
&\leq &g_{n}^{(1)}(\lambda _{n})+g_{n}^{(2)}(1) \\
&=&g_{n}^{(1)}(\lambda _{n})\left( 1+O\left( n^{-1}\right) \right).
\end{eqnarray*}
By  Propostion \ref{pro3} we find the result.
\end{proof}

To test the estimate just given in  Proposition \ref{prop4}  we work with the operator 
\begin{equation} \label{optest}
\mathcal{L}=\left[ \func{diag}(1,2,\dots,100)\right] ^{8}
\end{equation} 
so that $\sigma (\mathcal{L})\subseteq %
\left[ 1,10^{16}\right] $. In Figure \ref{Figure4} we plot the error and its estimate (\ref{e5}) with
respect to the number of inversions, that is, $2n.$ From now on, for discrete operators the error is plotted with respect to  the Euclidean matrix norm.
\begin{figure}[htbp]
  \centering
\includegraphics[width=0.80\textwidth]{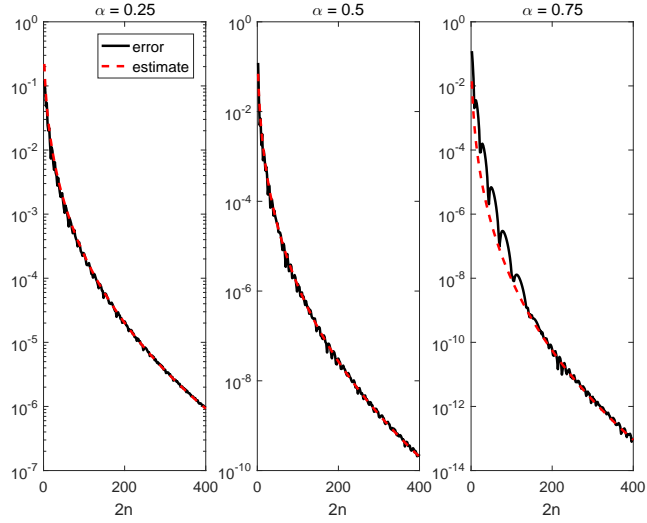}
 \caption{Error and its estimate given by (\ref{e5})  for  the operator defined in (\ref{optest}). }
\label{Figure4}
\end{figure}

Notwithstanding the above result, experimentally (see Figure \ref{Figure5}) it
is immediate to observe that
\begin{eqnarray*}
\max_{\lambda \geq 1}\left( g_{n}^{(1)}(\lambda )+g_{n}^{(2)}(\lambda ) \right)&\approx
&\max \left( g_{n}^{(1)}(\lambda _{n}),g_{n}^{(2)}(1)\right).
\end{eqnarray*}%

This because the contribution of a function in correspondence of the maximum
of the other one is negligible. In order to understand whenever $%
g_{n}^{(2)}(1)$ may be greater than $g_{n}^{(1)}(\lambda _{n})$ for some
values of $n$ and $\alpha$ (as in Figure \ref{Figure5} for $\alpha =0.75$) we just need to
compare $g_{n}^{(2)}(1)$ with $g_{n}^{(1)}(1)$.
\begin{figure}[htbp]
  \centering
\includegraphics[width=0.80\textwidth]{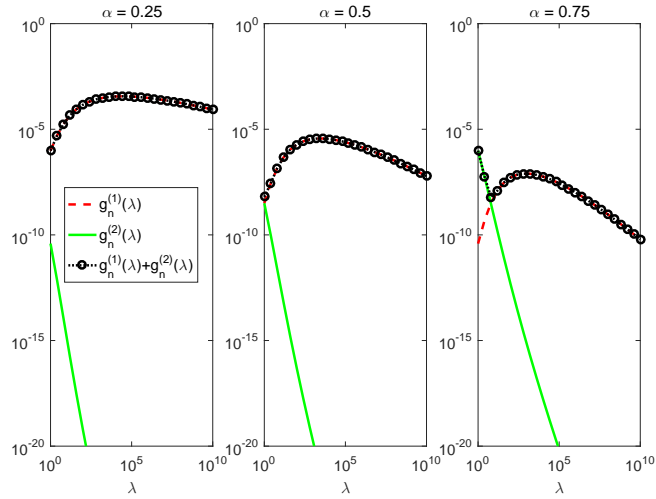}     
 \caption{Behavior of the functions $g_{n}^{(1)}(\lambda),g_{n}^{(2)}(\lambda),g_{n}^{(1)}(\lambda)+g_{n}^{(2)}(\lambda)$ for $n=30.$}
\label{Figure5}
\end{figure}
Using (\ref{gamma}), (\ref{g1}) and (\ref{g2}) the equation $%
g_{n}^{(1)}(1)=g_{n}^{(2)}(1)$ is approximatively equivalent to
\begin{equation*}
\exp \left( -\left( 2\pi \alpha \overline{n}\right) ^{1/2}\right) =\exp
\left( -\left( 2\pi (1-\alpha )\overline{n}\right) ^{1/2}\right) ,
\end{equation*}%
whose solution is $\alpha =1/2$ independently of $n.$ This means that for $\alpha \leq 1/2$%
\begin{equation*}
\max_{\lambda \geq 1} \left(g_n^{(1)}(\lambda )+g_n^{(2)}(\lambda ) \right)\approx
g_n^{(1)}(\lambda _{n})
\end{equation*}%
and therefore the error decays like $\exp \left( -cn^{1/3}\right) $ for some absolute constant  $c$ (cf. (%
\ref{dec1})), whereas for $\alpha >1/2$ the situation is a bit more
complicate. By comparing (\ref{dec1}) with (\ref{dec2}) we have that
asymptotically $g_n^{(2)}(1)$ decay faster than $g_n^{(1)}(\lambda _{n})$, so,
after a certain $n^{\ast }$ the decay rate is still of type $\exp \left(
-cn^{1/3}\right) $ also for $\alpha >1/2.$ Anyway, for $n\leq n^{\ast }$ the decay rate is of type $\exp \left(
-cn^{1/2}\right).$
The integer $n^{\ast }$ comes from the solution with respect to $n$ of%
\begin{equation*}
g_{n}^{(1)}(\lambda _{n})=g_{n}^{(2)}(1).
\end{equation*}
Using Proposition~\ref{pro3} we can estimate it by solving%
\begin{equation*}
\exp \left( -3\left( n\alpha ^{2}\pi ^{2}\right) ^{1/3}\right)
=\exp \left( -\left( 8\pi (1-\alpha )n\right) ^{1/2}\right).
\end{equation*}%
We easily find%
\begin{equation}
n^{\ast }\approx 4.5\frac{\alpha ^{4}}{(1-\alpha )^{3}}.  \label{nss}
\end{equation}
The previous considerations can be summarized as follows:
\begin{equation} \label{stimanum}
\left\Vert \mathcal{L}^{-\alpha }-R_{2n-1,2n}(\mathcal{L})\right\Vert
\approx 4\sin (\alpha \pi )S(n,\alpha), 
\end{equation}
where 
\begin{eqnarray}
S(n,\alpha) &=&\left\{ 
\begin{array}{ll}
g_{n}^{(1)}(\lambda_n), & (\forall n  \wedge \alpha \leq 1/2) \vee (n>n^{\ast}  \wedge \alpha >1/2) \\ 
g_{n}^{(2)}(1), & (n\leq n^\ast \wedge \alpha >1/2)
\end{array}%
\right. \label{errb}
\end{eqnarray}
(see (\ref{dec1}) and (\ref{dec3})).

%%%%%%%%%%%%%%%%%%%%%%
%%%%%%%%%%%%%%%%%%%%%%

\section{Truncated approaches}  \label{Sec 6}

The idea of truncating the Gauss-Laguerre rule is clearly not new and is
essentially consequence of the fact that the weights decay exponentially.
Among the existing papers on this point we recall \cite{Ber}, where a
truncated approach has been used for the computation of the Laplace
transform, and \cite{MM}, where the authors develop the error analysis of
the truncated Gauss-Laguerre rule for a general $f$ absolutely continuous.

Here we focus on the case where $f$ is an arbitrary continuous function that satisfies $0\leq f(x)\leq 1,$
since this is the case of the functions that appear in the definition of $I^{(i)}(\lambda), i=1,2.$
In fact, we clearly have that  for $\lambda \geq 1$ (see (\ref{int1}) and (\ref{int2}))
\[
0 \leq (1+e^{-x/\alpha }\lambda )^{-1}\leq 1, \qquad \quad
0 \leq (e^{-x/(1-\alpha )}+\lambda )^{-1}\leq 1.
\]
Suppose that a sequence of error approximations $\left\{ \varepsilon
_{n}\right\} _{n\geq 1}$ is available, that is,%
\begin{equation} \label{if01}
\left\vert I(f)-I_{n}(f)\right\vert \leq \varepsilon _{n},
\end{equation}%
where now $I_{n}(f)$ is the $n$-point Gauss-Laguerre approximation of $I(f),$ with $0\leq f(x)\leq 1.$
Since 
\[
\int_0^{+\infty }e^{-x} f(x) dx \le \int_0^{+\infty }e^{-x} dx,
\]
let $s_{n}$ be the solution of 
\[\label{intex}
\int_{s_{n}}^{+\infty }e^{-x}dx=\varepsilon _{n},
\]
that is,%
\begin{equation}
s_{n}=-\ln \varepsilon _{n}.  \label{sn}
\end{equation}
We consider the truncated rule
\begin{eqnarray*}
I_{k_{n}}(f) &=&\sum_{j=1}^{k_{n}}w_{j}^{(n)}f(\vartheta _{j}^{(n)}) \\
&=& I_{n}(f)-\sum_{j=k_{n}+1}^{n}w_{j}^{(n)}f(\vartheta
_{j}^{(n)}),
\end{eqnarray*}%
where $k_{n}\leq n$ is the smallest integer such that $\vartheta _{j}^{(n)}\geq
s_{n}$ for $j\geq k_{n}$.
Therefore,
\begin{eqnarray*}
\left\vert I(f)-I_{k_{n}}(f)\right\vert &=&\left\vert
I(f)-I_{n}(f)+\sum_{j=k_{n}+1}^{n}w_{j}^{(n)}f(\vartheta
_{j}^{(n)})\right\vert \\
&\leq &\left\vert I(f)-I_{n}(f)\right\vert +\sum_{j=k_{n}+1}^{n}w_{j}^{(n)}.
\end{eqnarray*}%
Using the bound \cite[Eqs. (2.4) and (2.7)]{MO}
\begin{equation*}
w_{j}^{(n)}\leq C(\vartheta _{j}^{(n)}-\vartheta _{j-1}^{(n)})e^{-\vartheta
_{j}^{(n)}},\quad j=2,\dots, n
\end{equation*}%
where $C$ is a constant independent of $n$, we have (see (\ref{sn}))
\begin{equation*}
\sum_{j=k_{n}+1}^{n}w_{j}^{(n)}\leq Ce^{-\vartheta _{k_{n}}^{(n)}}\leq
Ce^{-s_{n}}=C\varepsilon _{n},
\end{equation*}%
so that finally%
\begin{equation*}
\left\vert I(f)-I_{k_{n}}(f)\right\vert \leq (1+C)\varepsilon _{n}.
\end{equation*}

\begin{remark}
Experimentally one can easily check that the approximation%
\begin{equation*}
w_{j}^{(n)}\approx (\vartheta _{j}^{(n)}-\vartheta
_{j-1}^{(n)})e^{-\vartheta _{j}^{(n)}}
\end{equation*}%
is very accurate and hence in the numerical experiments we take $C=1.$
\end{remark}

%%%%%%%%%%%%%%%%%%%%%%
%%%%%%%%%%%%%%%%%%%%%%

\subsection{A balanced approach}

Let $k_{n}\leq n$ be the smallest integer such that (see (\ref{e5}))
\[
\vartheta _{j}^{(n)}\geq -\ln \left( 4\sin (\alpha \pi) \exp \left( -3\left( n\alpha ^{2}\pi ^{2}\right) ^{1/3}\right)  \right)  \left( 1+O\left( n^{-1/3}\right) \right),\quad j \ge k_{n}.
\]
Using the above theory we have that for $n$ large enough 
\begin{eqnarray} \label{errbal}
\left\Vert \mathcal{L}^{-\alpha }-R_{2k_{n}-1,2k_{n}}(\mathcal{L}%
)\right\Vert &\le& 4 (1+C) \sin
(\alpha \pi )\exp \left( -3\left( n\alpha ^{2}\pi ^{2}\right) ^{1/3}\right)  \\
&\times& \left( 1+O\left( n^{-1/3}\right) \right). \nonumber
\end{eqnarray}%
In order to derive error estimates with respect to $k_{n}$, that is, with
respect to the number of inversions, we first need to prove the following result.
\begin{proposition}
For $k$ large enough, the $k$-th root of the Laguerre polynomial of degree $n$  satisfies
\begin{equation}
\vartheta _{k}^{(n)}=c_{k}\frac{k^{2}\pi ^{2}}{4n}(1+O(n^{-2})),\quad
1<c_{k}\leq \left( 1+\frac{1}{k}\right) ^{2}. \label{pro1}  
\end{equation}
\end{proposition}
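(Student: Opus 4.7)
The natural route is to reduce the problem to a classical asymptotic for zeros of Laguerre polynomials in terms of zeros of the Bessel function $J_0$, and then to estimate the latter elementarily.

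First, I would invoke the Mehler--Heine / Tricomi asymptotic (see, e.g., Szeg\H{o}, \emph{Orthogonal Polynomials}, Ch.~8), which in its standard form states that, for $k$ fixed and $n$ large,
\[
\vartheta_k^{(n)} \;=\; \frac{j_{0,k}^{2}}{4n+2}\bigl(1+O(n^{-2})\bigr),
\]
where $j_{0,k}$ denotes the $k$-th positive zero of $J_0$. This is essentially the statement that $L_n\bigl(x/(4n+2)\bigr)$ converges uniformly on compacts to $J_0(\sqrt{x})$, promoted to an asymptotic for zeros via Hurwitz's theorem and refined by a one-term correction. Rewriting $1/(4n+2)=(1/4n)(1+O(n^{-1}))$ and absorbing the lower-order pieces into the error term produces the desired leading factor $k^2\pi^2/(4n)$, with $c_k$ naturally identified as
\[
c_k \;:=\; \frac{j_{0,k}^{2}}{k^{2}\pi^{2}}.
\]

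Second, I would establish the bounds on $c_k$ by using elementary enclosures for the zeros of $J_0$. A standard oscillation/interlacing argument (or the McMahon expansion $j_{0,k}=(k-\tfrac14)\pi+O(1/k)$) shows that, for $k$ sufficiently large,
\[
k\pi \;<\; j_{0,k} \;\le\; (k+1)\pi,
\]
and squaring then dividing by $k^{2}\pi^{2}$ gives exactly $1<c_k\le (1+1/k)^{2}$, which is what the statement claims.

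The main obstacle is not the algebraic bound on Bessel zeros (which is classical and essentially immediate), but rather the careful bookkeeping of the error term in the Mehler--Heine expansion: the Tricomi two-term formula actually has a correction of size $j_{0,k}^{2}/(48(n+1/2)^{2})=O(k^{2}/n^{2})$, so one must either restrict to the regime $k=o(n)$ relevant to the truncation in Section~\ref{Sec 6}, or argue that the hidden constants in the $O(n^{-2})$ remain harmless for the range of indices used in the balanced rule. An alternative, self-contained route is to work directly from the Sturm--Liouville equation satisfied by $L_n$ via a Liouville--Green (WKB) change of variable, locate the turning point, and analyse the oscillatory region to recover the same asymptotic; this is more laborious but avoids citing the Bessel-zero connection as a black box.
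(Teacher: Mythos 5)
Your overall route is the same as the paper's: reduce the Laguerre zero $\vartheta_k^{(n)}$ to a Bessel zero via the classical formula $\vartheta_k^{(n)}\approx j^2/(4n+2)\bigl[1+j^2/(4(4n+2)^2)\bigr]$ (the paper cites Abramowitz--Stegun Eq.\ (22.16.8), you cite the Mehler--Heine/Tricomi form), and then control the Bessel zero elementarily. The gap is in your second step. The enclosure $k\pi<j_{0,k}\le (k+1)\pi$ is false for the standard $k$-th positive zero of $J_0$ that you are using: McMahon, which you quote in the same sentence, gives $j_{0,k}=(k-\tfrac14)\pi+O(1/k)$, so in fact $(k-1)\pi<j_{0,k}<k\pi$ for every $k$ (numerically $j_{0,1}\approx 2.40<\pi$, $j_{0,2}\approx 5.52<2\pi$, etc.). With your definition $c_k=j_{0,k}^2/(k^2\pi^2)$ this yields $c_k\approx(1-\tfrac{1}{4k})^2<1$, so the claimed lower bound $1<c_k$ cannot be obtained along the line you sketch; your write-up is internally inconsistent at exactly the point that carries the content of the bounds on $c_k$. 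The paper's proof gets $1<c_k\le(1+1/k)^2$ because it works with a different zero: using the asymptotic $J_0(z)=(2/(\pi z))^{1/2}\cos(z-\pi/4)+O(z^{-3/2})$ together with a lower bound on $|J_0'|$, it locates a sign change of $J_0$ in $[\pi/2+k\pi,(k+1)\pi]$, i.e.\ a zero $j_k=(k+\tfrac34)\pi+O(k^{-1})$, which in your standard numbering is $j_{0,k+1}$, and it is this zero that is inserted into the Abramowitz--Stegun formula. To prove the proposition as stated you must either reproduce that sign-change localization (and adopt the corresponding index convention) or shift your Bessel index by one; as written, the key inequality fails.

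Two secondary remarks. First, your claim that the $O(n^{-1})$ relative error from $1/(4n+2)=(1/(4n))(1+O(n^{-1}))$ can be ``absorbed'' to yield the factor $(1+O(n^{-2}))$ with $c_k$ independent of $n$ is not correct as stated (the paper is equally cavalier here, but you should not present it as a completed step). Second, your caution about the correction term being of size $O(k^2/n^2)$, hence requiring $k=o(n)$ for uniformity, is well taken and indeed relevant to the regime $k\sim n^{2/3}$ used later in the truncated rules; this concern applies to the paper's proof as well, but it is secondary to the indexing issue above.
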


\begin{proof}
First of all we need to study the asymptotic behavior of the 
roots of $J_0(z),$ the Bessel function of the first kind of order 0.  By \cite[Eq. (1.71.7)]{Sz}%
\begin{equation*}
J_0(z)=\left( \frac{2}{\pi z}\right) ^{1/2}\cos \left( z-\frac{\pi }{4}\right)
+O(z^{-3/2}),
\end{equation*}%
we observe that there is a root, say $j_{k},$ in $I=[\pi/2+k\pi ,(k+1)\pi ]$ since $J_0(z)$ changes sign. Now, let%
\begin{equation} \label{zk}
z_{k}=\frac{3}{4}\pi +k\pi \in I
\end{equation}%
be the solution of $\cos \left( z-\frac{\pi }{4}\right) =0$, so that $%
J_0(z_{k})=O(k^{-3/2})$. Therefore,
\begin{equation*} 
z_{k}-j_{k}=\frac{J_0(z_{k})}{J_0^{\prime }(\xi )},\quad \xi \in I.
\end{equation*}%
Now, since
\begin{equation*}
J_0^{\prime }(\xi )=\left( \frac{2}{\pi }\right) ^{1/2}\left[ -\frac{1}{2}\xi
^{-3/2}\cos \left( \xi -\frac{\pi }{4}\right) -\sin \left( \xi -\frac{\pi }{4%
}\right) \xi ^{-1/2}\right] +O(\xi ^{-5/2}).
\end{equation*}%
and
\begin{equation*}
\left\vert \sin \left( \xi -\frac{\pi }{4}\right) \right\vert \geq \frac{%
\sqrt{2}}{2}
\end{equation*}%
we deduce that $J_0^{\prime }(\xi )=O(k^{-1/2}).$ From the above considerations we have
\begin{equation*}
z_{k}-j_{k}=O(k^{-1})
\end{equation*}
and then, using (\ref{zk}) we get 
\[
j_{k}^{2} =\left( \frac{3}{4}\pi +k\pi +O(k^{-1})\right) ^{2} =\left( k\pi \right) ^{2}\left( 1+\frac{3}{4k}+O(k^{-2})\right) ^{2}.
\]
By \cite[Eq. (22.16.8)]{AS}
\begin{equation*}
\vartheta _{k}^{(n)}=\frac{j_{k}^{2}}{4n+2}\left[ 1+\frac{j_{k}^{2}}{4\left(
4n+2\right) ^{2}}\right] +O\left( n^{-5}\right) 
\end{equation*}
we obtain the result.
\end{proof}

Now we want to solve with respect to  $k$ 
\begin{equation}
\vartheta _{k}^{(n)}=-\ln \left( 4\sin (\alpha \pi) \exp \left( -3\left( n\alpha ^{2}\pi ^{2}\right) ^{1/3}\right)  \right)  \left( 1+O\left( n^{-1/3}\right) \right).  \label{equk}
\end{equation}
For $k$ large enough, by (\ref{pro1}), the solution of (\ref{equk})
satisfies%
\begin{equation}
c_{k}\frac{k^{2}\pi ^{2}}{4n}(1+O(n^{-2}))=-\ln \left( 4\sin (\alpha \pi
)\right) +3\left( n\alpha ^{2}\pi ^{2}\right) ^{1/3}+O(n^{-1/3}),
\label{equk2}
\end{equation}%
that is%
\begin{equation*}
c_{k}\frac{k^{2}\pi ^{2}}{4n}=3\left( n\alpha ^{2}\pi ^{2}\right)
^{1/3}(1+O(n^{-1/3})).
\end{equation*}%
By the definition of $c_{k}$ we thus have $k\sim n^{2/3}$ and therefore%
\begin{equation*}
k^{2}\left( 1+O(k^{-1/2})\right) =12\alpha ^{2/3}\pi ^{-4/3}n^{4/3}
\end{equation*}%
that leads to%
\begin{equation*}
n^{1/3}=\frac{k^{1/2}}{12^{1/4}\alpha ^{1/6}\pi ^{-1/3}}\left(
1+O(k^{-1/2})\right).
\end{equation*}%
Using this value in (\ref{errbal}) we find%
\begin{eqnarray*}
 \left\Vert \mathcal{L}^{-\alpha }-R_{2k-1,2k}(\mathcal{L})\right\Vert  &\leq&
4(1+C)\sin (\alpha \pi ) \\
&\times& \exp \left( -3\frac{\pi }{12^{1/4}}\alpha
^{1/2}k^{1/2}\left( 1+O(k^{-1/2})\right) \right) \left( 1+O(k^{-1/2})\right) \\
&\leq& 4(1+C){\hat C}\sin (\alpha \pi )\exp \left( -3\frac{\pi }{12^{1/4}}\alpha
^{1/2}k^{1/2}\right),
\end{eqnarray*}%
where the constant ${\hat C}$ takes into account of the term $\left(
1+O(k^{-1/2})\right) $.

We remark however that the above analysis can be simplified by neglecting
the terms $\ln \left( 4\sin (\alpha \pi )\right) $ and $c_{k}$ in (\ref%
{equk2}), and solving directly%
\begin{equation*}
\frac{k^{2}\pi ^{2}}{4n}=3\left( n\alpha ^{2}\pi ^{2}\right) ^{1/3}.
\end{equation*}%
Using the floor function, we denote by 
\begin{equation}   \label{kn1}
k_{n}^{(1)} = \left \lfloor  2\sqrt{3}  \left( \frac{ \alpha n^2}{\pi^2} \right)^{1/3} \right \rfloor,
\end{equation}
that experimentally is confirmed to be a value rather closed to $k_{n}$, in
a reasonable range of values of $\alpha $, say $\alpha \in [0.05,0.95]$, leading to a method that is almost 
 indistinguishable from the one with $k_{n}$. Since%
\begin{equation}
n \approx \frac{\pi }{\alpha ^{1/2}} \left( \frac{k_{n}^{(1)}}{2\sqrt{3}}\right)^{3/2}  \label{knn}
\end{equation}%
using (\ref{errbal}) we find%
\begin{equation}  \label{bal}
\left\Vert \mathcal{L}^{-\alpha }-R_{2k_{n}^{(1)}-1,2k_{n}^{(1)}}(\mathcal{L}%
)\right\Vert  \approx 4(1+C) \sin (\alpha \pi )\exp \left( - 3.6 \alpha^{1/2}  \left( 2 k_{n}^{(1)} \right)^{1/2} \right). 
\end{equation}

By using again the  operator (\ref{optest}), in Figure \ref{Figure6} we compare the two errors 
provided by applying the $n$-point Gauss-Laguerre rule and the corresponding balanced formula, that is
\[
\left\Vert \mathcal{L}^{-\alpha }-R_{{2j-1},2j}(\mathcal{L})\right\Vert, \qquad j=n, k_n^{(1)}.
\] 
We can observe the great improvement in terms of computational cost attainable with the truncated approach.
\begin{figure}[htbp]
  \centering
\includegraphics[width=0.80\textwidth]{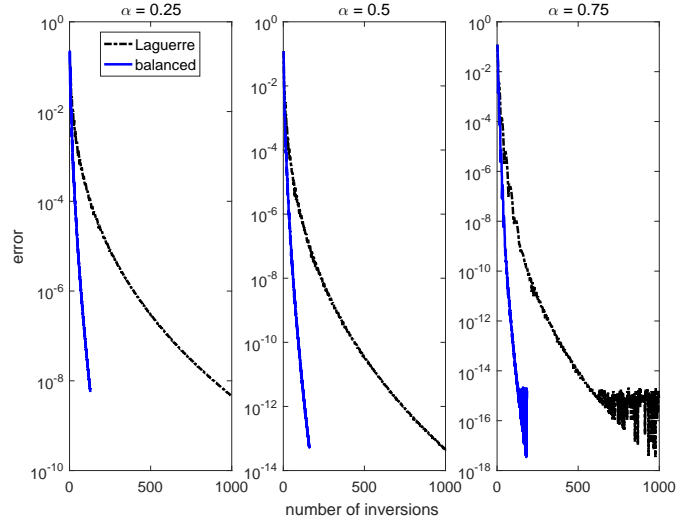}
 \caption{$\left\Vert \mathcal{L}^{-\alpha }-R_{{2j-1},2j}(\mathcal{L})\right\Vert$ vs  the number of inversions $2j,$ for  $j=n$ (Laguerre) and $j=k_{n}^{(1)}$ (balanced).} \label{Figure6}
\end{figure}
In Figure \ref{Figure7}  we focus the attention on the truncated (balanced) approach. We plot the error and its estimate (\ref{bal}) with $C=1$  with respect to the number of inversions, that is, $2k_n^{(1)}.$ The results show the accuracy of the estimate. 
\begin{figure}[htbp]
  \centering
\includegraphics[width=0.80\textwidth]{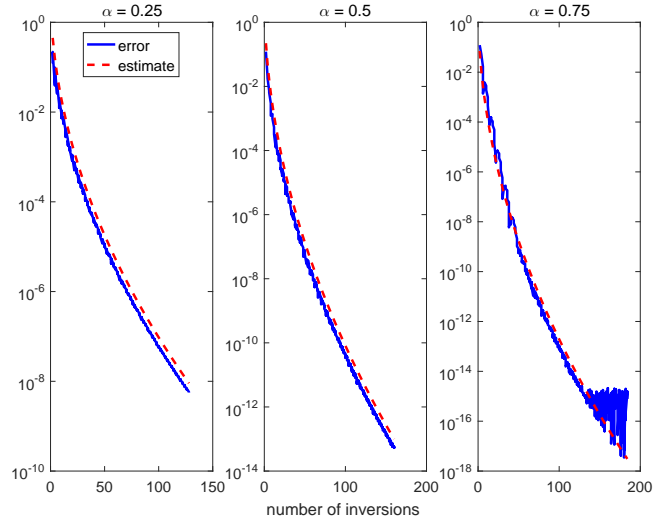}
 \caption{Error and its estimate given by (\ref{bal}) for the  operator defined in (\ref{optest}).}
\label{Figure7}
\end{figure}

When $\alpha >1/2$ the above estimate may be optimistic for $n\leq n^{\ast
}$ (cf. (\ref{nss})). Working with (\ref{stimanum})-(\ref{errb}) with $S(n,\alpha)=g_{n}^{(2)}(1)$ and following the same analysis that starts from (\ref{equk}), by  (\ref{dec3}) we find that 
$k = 2  (1-\alpha )^{1/4} \left(2 n /\pi\right)^{3/4} $
and then the value
\begin{equation}
k_{n}^{(2)}:= 2 \left \lfloor  (1-\alpha )^{1/4}   \left( \frac{2n}{\pi}\right)^{3/4}   \right \rfloor \label{kn2}
\end{equation}%
is very close to $k_{n}.$ Therefore, 
\begin{eqnarray}  \label{bal2}
&&\left\Vert \mathcal{L}^{-\alpha }-R_{2 k_{n}^{(2)} -1,2 k_{n}^{(2)}}(\mathcal{L}%
)\right\Vert  \approx 4(1+C)\sin (\alpha \pi ) \\
&\times& \exp \left( -2.96(1-\alpha
)^{1/3} \left( 2k_n^{(2)} \right)^{2/3}\right),  \qquad \mbox{ for } \, (n \le n^{\ast })  \wedge (\alpha >1/2), \nonumber
\end{eqnarray}%
which expresses an initial convergence very fast with respect to the number of inversions. 
For $\alpha >1/2,$ one should use the first $k_n^{(2)}$  
Laguerre points for $n\le n^{\ast }$ and then switch to the first $k_n^{(1)}$  
for $n> n^{\ast }.$ 
Anyway, experimentally it can be observe
that the corresponding method does not offer a valuable improvement with
respect to the choice of the first $k_n^{(1)},$ independently of $\alpha $ and $n$. 

Therefore, the balanced approach that we propose is the one based on (\ref{kn1}), and reported in the figures, with error estimate given by (\ref{bal})  independently of $\alpha $ and $n$. 

%%%%%%%%%%%%%%%%%%%%%%
%%%%%%%%%%%%%%%%%%%%%%

\subsection{An equalized approach}

The idea is to work separately on the two integrals and hence to consider
approximations of the type%
\begin{equation*}
\mathcal{L}^{-\alpha }\approx \frac{\sin (\alpha \pi )}{\alpha \pi } 
R_{k_{n_{1}}-1,k_{n_{1}}}^{(1)}(\mathcal{L})+  \frac{\sin (\alpha \pi )}{(1-\alpha) \pi }  R_{k_{n_{2}}-1,k_{n_{2}}}^{(2)}(%
\mathcal{L}),
\end{equation*}%
in which $R_{k_{n_{i}}-1,k_{n_{i}}}^{(i)}(\lambda )$, $i=1,2$, represents
the truncated Gauss-Laguerre rule for $I^{(i)}(\lambda )$ based on the first $%
k_{n_{i}}$ roots of the Laguerre polynomials of degree $n_i.$ 
For $n_{1}\neq n_{2}$ we use then different sets of points, and clearly the
total number of inversions is now $k_{n_{1}}+k_{n_{2}}$. 

We first consider the case where, for a
given $n$, $\varepsilon_{n}^{(1)}({\lambda})/\alpha\geq \varepsilon_{n}^{(2)}(\lambda)/(1-\alpha)$ (cf. (\ref{errori}) and (\ref{line})) and
we define $n_{1}=n$. Then, we evaluate $k_{n_{1}}=k_{n_{1}}^{(1)}$ as in (\ref{kn1})
and we approximate $I^{(1)}(\mathcal{L})$ with $R_{k_{n_{1}}-1,k_{n_{1}}}^{(1)}(%
\mathcal{L})$. Then, we find $n_{2}$ ($\leq n_{1}$) such that 
\[
g_{n_{1}}^{(1)}({\lambda_{n_{1}}}) = g_{n_2}^{(2)}(1)
\]
that is,
\begin{equation}
\exp \left(-3\left( n_{1}\alpha ^{2}\pi ^{2}\right) ^{1/3}\right) =  \exp \left( -\left( 8\pi (1-\alpha )n_{2}\right) ^{1/2}\right),  \label{ee}
\end{equation}%
(cf.   (\ref{dec1}) and (\ref{dec3})). At this point we compute as in (\ref{kn2})%
\begin{equation}
k_{n_{2}}=k_{n_2}^{(2)}=  2 \left \lfloor  (1-\alpha )^{1/4}   \left( \frac{2n_2}{\pi}\right)^{3/4} \right \rfloor,  \label{knb}  
\end{equation}%
and use the Gauss-Laguerre rule $R_{k_{n_{2}}-1,k_{n_{2}}}^{(2)}(\mathcal{L})$ for
the second integral. Clearly, for each $n$ the error  estimate for the equalized  approach remains the one of the
balanced approach given by (\ref{bal}), but now we have less inversions. 
In this view, we have to find the relationship between $k_{n_{1}}$ and $k_{n_{2}}.$
From (\ref{ee}) we get
\begin{equation*}
n_{2}=\frac{9}{8}\pi ^{1/3}\frac{\alpha ^{4/3}}{1-\alpha }n_{1}^{2/3}
\end{equation*}
so that using   (\ref{knb}) we can express $k_{n_{2}}$ in terms of $n_1.$ Then, 
by (\ref{knn})  we obtain
\[
k_{n_{2}} \approx \frac{3.09}{\alpha ^{3/4}(1-\alpha )^{1/2}}k_{n_{1}}^{3/4}.
\]
from which we deduce that   $(k_{n_{1}}+k_{n_{2}}) \leq 2k_{n_{1}}.$

As for the case $\varepsilon_{n}^{(1)}({\lambda})/\alpha < \varepsilon_{n}^{(2)}(\lambda)/(1-\alpha)$ the arguments follow the same line. Let $n_{2}=n
$ and compute the second integral with $R_{k_{n_{2}}-1,k_{n_{2}}}^{(2)}(%
\mathcal{L})$. Then,  solving (\ref{ee}) with respect to $n_1$  ($\leq n_{2}$)
we obtain
\begin{equation*}
n_{1}=\frac{\left( 8(1-\alpha )\right) ^{3/2}}{27\alpha ^{2}\pi ^{1/2}}%
n_{2}^{3/2}.
\end{equation*}
Consequently, as in (\ref{kn1})%
\[
k_{n_{1}}=k_{n_1}^{(1)}=  \left \lfloor  2\sqrt{3}  \left( \frac{ \alpha n_1^2}{\pi^2} \right)^{1/3} \right \rfloor,
\]
and we compute the first integral with $R_{k_{n_{1}}-1,k_{n_{1}}}^{(1)}(%
\mathcal{L})$. Using (\ref{kn2}) we also have%
\[
k_{n_{2}}=2 \left \lfloor  (1-\alpha )^{1/4}   \left( \frac{2n_2}{\pi}\right)^{3/4}   \right \rfloor
\]
and therefore, collecting the above expressions we finally obtain%
\[
k_{n_{1}}  \approx 0.61  \frac{(1-\alpha )^{2/3}}{\alpha} k_{n_{2}}^{4/3}.
\]
As before, the error estimate for the equalized approach is the one of the balanced approach given by (\ref%
{bal2}) but the number of inversions that we have to consider  is now $(k_{n_{1}}+k_{n_{2}})\leq 2k_{n_{2}}$.

In Figure \ref{Figure8} we consider the comparison between  our two truncated approaches together with Sinc rule analyzed in \cite{Bo}.
\begin{figure}[htbp]
  \centering
\includegraphics[width=0.80\textwidth]{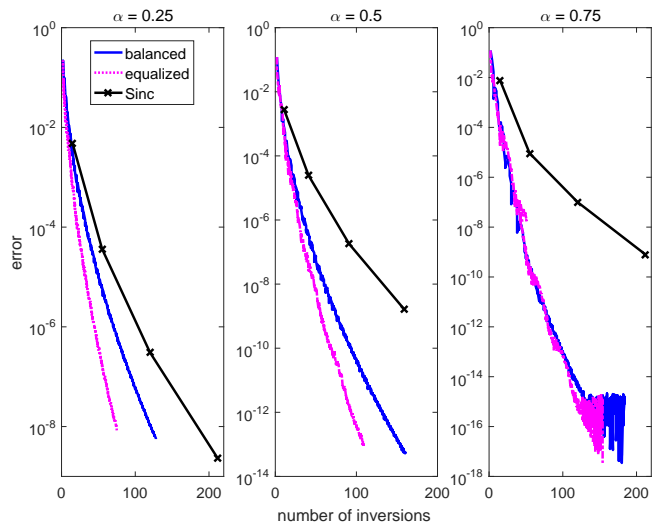}
 \caption{Comparison between the errors provided by the balanced and equalized approaches with the Sinc quadrature studied in \cite{Bo}.}
\label{Figure8}
\end{figure}

%%%%%%%%%%%%%%%%%%%%%%%
%%%%%%%%%%%%%%%%%%%%%%%
\section{Conclusions}

In this work we have considered the construction of very fast methods based on the Gauss-Laguerre rule and we have been able to provide accurate error estimates that can be used to a priori select the number of points to use. We observe that while all the experiments concern the artificial example (\ref{optest}), other tests on finite difference discretizations of the Laplace operator have essentially led to  identical results.

%%%%%%%%%%%%%%%%%%%%%%%
%%%%%%%%%%%%%%%%%%%%%%%

\end{document}